\newtheorem{thm}{Theorem}[section]
\newtheorem{lem}[thm]{Lemma}
\newenvironment {proof} {\noindent{\em Proof.}}{\hspace*{\fill}$\Box$\par\vspace{4mm}}
\newcommand{\ml}{l\kern-0.55mm\char39\kern-0.3mm}
\title{\textbf{Erd\H{o}s-Gallai-type results for the rainbow disconnection number of graphs\footnote{Supported by NSFC No.11871034, 11531011 and NSFQH No.2017-ZJ-790.}}}
\author{{\small Xuqing Bai$^1$, Xueliang Li$^{1,2}$ } \\
{\small  $^1$Center for Combinatorics and LPMC}\\
{\small Nankai University, Tianjin 300071, China}\\
{\small Email: baixuqing0@163.com, lxl@nankai.edu.cn}\\
{\small $^2$School of Mathematics and Statistics, Qinghai Normal University}\\
{\small Xining, Qinghai 810008, China}\\
}
\date{}
\begin{document}
\maketitle
\begin{abstract}
Let $G$ be a nontrivial connected and edge-colored graph.
An edge-cut $R$ of $G$ is called a rainbow cut if no
two edges of it are colored with a same color. An edge-colored
graph $G$ is called rainbow disconnected if for every two
distinct vertices $u$ and $v$ of $G$, there exists a $u-v$ rainbow cut
separating them.
For a connected graph $G$, the rainbow disconnection
number of $G$, denoted by $rd(G)$, is defined as the
smallest number of colors that are needed in order to
make $G$ rainbow disconnected. In this paper, we will
study the Erd\H{o}s-Gallai-type results for
$rd(G)$, and completely solve them.

\noindent\textbf{Keywords:}
rainbow cut, rainbow disconnection coloring (number),
Erd\H{o}s-Gallai-type result

\noindent\textbf{AMS subject classification 2010:} 05C15, 05C35, 05C40.
\end{abstract}

\section{Introduction}

All graphs considered in this paper are simple, finite
and undirected. Let $G=(V(G),E(G))$ be a nontrivial
connected graph with vertex set $V(G)$ and edge
set $E(G)$. For $v\in V(G)$, let $N_G(v)$ and $N_G[v]$
denote the \emph{open neighbour} of $v$ and the
\emph{closed neighbour} of $v$ in $G$, respectively.
For any notation or terminology not defined here, we
follow those used in \cite{BM}.

Throughout this paper, we use
$K_n$ to denote a complete graph of order $n$.
A \emph{$k$-factor} of $G$ is a $k$-regular spanning
subgraph of $G$, and $G$ is \emph{$k$-factorable} if
there are edge-disjoint $k$-factors $H_1$, $H_2$,
\ldots, $H_n$ such that $G=H_1\cup H_2\cup \ldots \cup H_n$.
A subset $M$ of $E$ is called a \emph{matching} of
$G$ if any two edges of $M$ do not share a common vertex of $G$.

Let $G$ be a graph with an \emph{edge-coloring} $c$:
$E(G)\rightarrow [k] = \{1,2,...,k\}$, $k \in \mathbb{N}$,
where adjacent edges may be colored the same. When
adjacent edges of $G$ receive different colors under $c$,
the edge-coloring $c$ is called \emph{proper}.
The \emph{chromatic index} of $G$, denoted by $\chi'(G)$,
is the minimum number of colors needed in a proper
coloring of $G$. By a famous theorem of Vizing \cite{V},
one has
$$\Delta(G) \leq \chi'(G) \leq \Delta(G)+1$$
for every nonempty graph $G$.

A path is called \emph{rainbow} if no two edges of it are
colored the same. An edge-colored graph $G$ is called
\emph{rainbow~connected} if every two distinct vertices of $G$ are
connected by a rainbow path in $G$. An edge-coloring under
which $G$ is rainbow connected is called a
\emph{rainbow~connection~coloring}. Clearly, if a
graph is rainbow connected, it must be connected.
For a connected graph $G$, the
\emph{rainbow connection number} of $G$, denoted by
$rc(G)$, is the smallest number of colors that are
needed to make $G$ rainbow connected. Rainbow
connection was introduced by Chartrand et al.
\cite{CJMZ} in $2008$. For more details on rainbow
connection, see the book \cite{LS1} and the survey
papers \cite{LSS, LS2}.

In this paper, we investigate a new concept that
is somewhat reverse to rainbow connection. This
concept of rainbow disconnection of graphs was
introduced by Chartrand et al. \cite{CDHHZ} very
recently in $2018$.

An \emph{edge-cut} of a connected graph $G$ is a set $S$ of
edges such that $G-S$ is disconnected. The minimum
number of edges in an edge-cut is defined as the
\emph{edge-connectivity} $\lambda(G)$ of $G$. We have the
well-known inequality $\lambda(G)\leq \delta(G)$.
For two vertices $u$ and $v$, let $\lambda(u,v)$
denote the minimum number of edges in an edge-cut
$S$ such that $u$ and $v$ lie in different components
of $G-S$.
The so-called \emph{upper edge-connectivity}
$\lambda^+(G)$ of $G$ is defined by
$$\lambda^+(G) = max\{\lambda(u,v): u, v\in V(G)\}.$$
$\lambda^+(G)$ is the maximum local edge-connectivity of $G$,
while $\lambda(G)$ is the  minimum global
edge-connectivity of $G$.

An edge-cut $R$ of an edge-colored connected graph $G$ is called a {\it rainbow cut} if no
two edges in $R$ are colored the same. A rainbow cut
$R$ is said to separate two vertices $u$ and $v$ if
$u$ and $v$ belong to different components of $G-R$.
Such rainbow cut is called a $u-v$ rainbow cut. An
edge-colored graph $G$ is called \emph{rainbow disconnected}
if for every two distinct vertices $u$ and $v$ of $G$, there
exists a $u-v$ rainbow cut in $G$. In this case,
the edge-coloring $c$ is called a
\emph{rainbow disconnection coloring} of $G$.
Similarly, we define the \emph{rainbow disconnection number}
of a connected graph $G$, denoted by $rd(G)$, as the smallest number of
colors that are needed in order to make $G$ rainbow
disconnected. A rainbow disconnection coloring with
$rd(G)$ colors is called an $rd$-\emph{coloring} of $G$.

The Erd\H{o}s-Gallai-type problem is an interesting
problem in extremal graph theory, which was studied
in \cite{LLSZ, LLS, L} for rainbow connection number
$rc(G)$; in \cite{HLW} for proper connection number
$pc(G)$; in \cite{CLW} for monochromatic connection
number $mc(G)$, and many other parameter of graphs in
literature. We will study the Erd\H{o}s-Gallai-type
results for the rainbow disconnection number $rd(G)$
in this paper.

\section{ Preliminary results}

For given integers $k$ and $n$ with
$1 \leq k \leq n-1$, the authors in \cite{CDHHZ} determined the
minimum size of a connected graph $G$ of order $n$
with $rd(G) = k$. So, this brings up the question of
determining the maximum size of a connected graph
$G$ of order $n$ with $rd(G) = k$.
The authors of \cite{CDHHZ} conjectured and we determined in \cite{BCL}
the maximum size of a connected graph $G$ of order $n$
with $rd(G) = k$, for odd integer $n$. But for even integer $n$,
it was left without solution. Now we consider the question of
determining the maximum size of a connected graph
$G$ of even order $n$ with $rd(G) = k$
and we get the following result.

\begin{thm}\label{even}
Let $k$ and $n$ be integers with $1\leq k\leq n-1$
and $n$ be even. Then the maximum size of a connected
graph $G$ of order $n$ with $rd(G)=k$
is $\lfloor\frac{(k+1)(n-1)}{2}\rfloor$.
\end{thm}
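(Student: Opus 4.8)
\noindent The plan is to establish the two bounds separately: an upper bound $|E(G)|\le\lfloor\frac{(k+1)(n-1)}{2}\rfloor$ valid for every connected graph $G$ of order $n$ with $rd(G)=k$, and a matching construction. The bridge between $rd$ and local edge-connectivity is the observation that if $c$ is an $rd$-coloring of $G$ using $k$ colors, then for any two vertices $u,v$ a $u$-$v$ rainbow cut consists of at most $k$ (distinctly colored) edges, so $\lambda(u,v)\le k$; hence $\lambda^+(G)\le rd(G)$ in general, and $rd(G)=k$ forces $\lambda^+(G)\le k$. Thus the upper bound reduces to: \emph{if $\lambda^+(G)\le k$ then $|E(G)|\le\lfloor\frac{(k+1)(n-1)}{2}\rfloor$}. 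This intermediate statement is parity-free, consistent with the odd-$n$ answer of \cite{BCL}; the genuinely new regime for even $n$ is $k$ even, where $(k+1)(n-1)$ is odd and the floor is strict.

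\medskip\noindent\emph{Upper bound.} I would use a Gomory--Hu tree $T$ of $G$: a spanning tree on $V(G)$ in which each edge $e$ carries a weight $w(e)$ equal to the size of a minimum edge-cut between its endpoints, realized by the bipartition of $V(G)$ into the two components of $T-e$. Then $\max_{e\in E(T)}w(e)=\lambda^+(G)\le k$. A $G$-edge $xy$ lies in the cut corresponding to $e$ exactly when $e$ lies on the $T$-path from $x$ to $y$, so a double count gives
$$\sum_{e\in E(T)}w(e)=\sum_{xy\in E(G)}d_T(x,y),$$
where $d_T$ denotes the tree distance. Each term on the right is at least $1$, with equality only for those edges of $G$ that are also edges of $T$, of which there are at most $|E(T)|=n-1$; hence the right-hand side is at least $2|E(G)|-(n-1)$. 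Since the left-hand side is at most $k(n-1)$, we get $2|E(G)|\le(k+1)(n-1)$, and as $|E(G)|$ is an integer, $|E(G)|\le\lfloor\frac{(k+1)(n-1)}{2}\rfloor$.

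\medskip\noindent\emph{Sharpness.} Take $G=K_1+H$, the join of one vertex $v_0$ with a graph $H$ on the remaining $n-1$ vertices, where every vertex of $H$ has degree $k-1$ if $k$ is odd, and all but one vertex of $H$ have degree $k-1$ with the remaining one of degree $k-2$ if $k$ is even. Such an $H$ exists: the degree sum is even (in the $k$-even case this uses that $n$ is even), and the maximum degree $k-1$ does not exceed $n-2$ (note that $n$ even forces $k\le n-2$ when $k$ is even). A routine computation in each parity case gives $|E(G)|=(n-1)+|E(H)|=\lfloor\frac{(k+1)(n-1)}{2}\rfloor$. To see $rd(G)\le k$: by Vizing's theorem $H$ has a proper edge-coloring with at most $\Delta(H)+1\le k$ colors; since every vertex of $H$ has degree at most $k-1$, some color of $[k]$ is missing at it, and we give the edge joining it to $v_0$ that color. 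In the resulting $k$-edge-coloring every vertex $u$ of $H$ has all its incident edges distinctly colored, so the set of edges at $u$ is a rainbow cut, and such cuts separate every pair of vertices of $G$; hence $G$ is rainbow disconnected with $k$ colors. Conversely $rd(G)\ge\lambda^+(G)=k$: all vertices of $H$ have $G$-degree at most $k$, so $\lambda^+(G)\le k$, while for a vertex $v$ of $H$ with $\deg_H(v)=k-1$ the edge $v_0v$ together with the $k-1$ paths $v_0$-$u$-$v$ over the $H$-neighbors $u$ of $v$ shows $\lambda_G(v_0,v)=k$. Therefore $rd(G)=k$ and $|E(G)|=\lfloor\frac{(k+1)(n-1)}{2}\rfloor$, so the bound is attained.

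\medskip\noindent The hard part is the upper bound. The obvious inductions — delete a vertex of minimum degree, or cut along a minimum edge-cut and recurse on the two sides — only control a quantity of size up to $\lambda^+(G)=k$ per step, whereas to preserve the bound $\frac{(k+1)(n-1)}{2}$ one may spend only about $\frac{k+1}{2}$ per removed vertex; closing this factor-of-two gap is exactly what the global Gomory--Hu count provides. A smaller but still necessary technical point is the existence of the near-regular graph $H$ and the careful tracking of the floor when $k$ is even, which is precisely the situation that distinguishes the even-$n$ case from the odd-$n$ result of \cite{BCL}.
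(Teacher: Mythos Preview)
Your proof is correct and differs from the paper's in both halves. For the upper bound, the paper simply invokes Mader's theorem (Lemma~\ref{key}) as a black box: if $|E(G)|>\frac{(k+1)(n-1)}{2}$ then $\lambda^+(G)\ge k+1$, contradicting $rd(G)=k$ via Lemma~\ref{p1}. Your Gomory--Hu double count is a self-contained proof of exactly this inequality (without the refinement term $\sigma_k$), trading one classical tool for another while making the mechanism transparent. For the construction, the paper starts from a $1$-factorable $(k-1)$-regular graph $H_{k-1}$ on all $n$ vertices equipped with a specially placed vertex $u$ (this is the content of Lemma~\ref{2}), then adds $\lfloor\frac{k-1}{2}\rfloor$ matching edges inside $N(u)$ together with $n-k$ edges from $u$ to its non-neighbours; the $1$-factorization furnishes a proper $(k-1)$-coloring of $H_{k-1}$, and every added edge receives the single fresh color $k$. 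Your join $G=K_1+H$ with $H$ near-$(k-1)$-regular on $n-1$ vertices is structurally simpler---it dispenses with Lemma~\ref{2} altogether and needs only Vizing's theorem for the coloring---at the price of a less explicit description of the color classes. Both constructions exploit the same underlying idea (arrange that every vertex except one hub has its incident edges rainbow), and both certify $rd(G)\ge k$ through $\lambda^+(G)\ge k$: the paper again via Lemma~\ref{key}, you by exhibiting $k$ edge-disjoint $v_0$--$v$ paths directly.
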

Before we give the proof of Theorem \ref{even},
some useful lemmas are stated as follows.
\begin{lem}{\upshape\cite{CDHHZ}}\label{p1}
If $G$ is a nontrivial connected graph, then
$$\lambda(G) \leq \lambda^+(G)\leq rd(G)\leq
\chi'(G) \leq \Delta(G)+1 .$$
\end{lem}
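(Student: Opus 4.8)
The plan is to verify the four inequalities in the chain one at a time, reading from left to right; three of them are short definition-chases and the rightmost is exactly Vizing's theorem, already quoted in the introduction. For the leftmost inequality $\lambda(G)\le\lambda^+(G)$, recall that $\lambda^+(G)=\max\{\lambda(u,v):u,v\in V(G)\}$ while, by Menger's theorem, the global edge-connectivity satisfies $\lambda(G)=\min\{\lambda(u,v):u,v\in V(G)\}$; since a minimum over a nonempty family never exceeds the corresponding maximum, the inequality is immediate.

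The heart of the argument, and the only step requiring any care, is $\lambda^+(G)\le rd(G)$. I would fix an $rd$-coloring $c$ of $G$ using $rd(G)$ colors together with an arbitrary pair of vertices $u,v$. By the definition of rainbow disconnection there is a rainbow $u$-$v$ cut $R$. On the one hand $R$ is a $u$-$v$ cut, so $|R|\ge\lambda(u,v)$; on the other hand $R$ is rainbow, so its edges carry pairwise distinct colors and therefore $|R|\le rd(G)$. Combining these gives $\lambda(u,v)\le rd(G)$, and since this bound is uniform in $u,v$, taking the maximum over all pairs yields $\lambda^+(G)\le rd(G)$.

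For $rd(G)\le\chi'(G)$ the idea is to check that every proper edge-coloring is already a rainbow disconnection coloring. Given a proper edge-coloring and two distinct vertices $u,v$, the set of all edges incident with $u$ forms a $u$-$v$ cut, and properness guarantees these edges receive pairwise distinct colors, so this set is a rainbow $u$-$v$ cut. Hence $G$ is rainbow disconnected under any proper coloring, which forces $rd(G)$ to be at most the number of colors in an optimal proper coloring, i.e. $rd(G)\le\chi'(G)$. Finally, $\chi'(G)\le\Delta(G)+1$ is precisely the upper bound in Vizing's theorem. No step presents a genuine obstacle; the proof is entirely a matter of chasing the definitions together with the two standard facts (Menger and Vizing) invoked above.
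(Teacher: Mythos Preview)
Your proof is correct in every step; each inequality is handled by exactly the natural argument (min $\le$ max for $\lambda\le\lambda^+$, the counting bound on a rainbow cut for $\lambda^+\le rd$, the observation that the star at a vertex is a rainbow cut under any proper coloring for $rd\le\chi'$, and Vizing for $\chi'\le\Delta+1$). Note, however, that the paper does not supply its own proof of this lemma: it is quoted from \cite{CDHHZ} as a preliminary result, so there is nothing in the present paper to compare your argument against. Your write-up is precisely the standard justification one would expect for this chain.
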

\begin{lem}{\upshape\cite{CDHHZ}}\label{p2}
Let $G$ be a nontrivial connected graph.
Then $rd(G) = 1$ if and only if $G$ is a tree.
\end{lem}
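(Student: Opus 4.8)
The plan is to prove the two implications separately, using the inequality $\lambda^+(G)\le rd(G)$ supplied by Lemma~\ref{p1} to dispatch the harder direction cleanly.

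For the easy direction, suppose $G$ is a tree. I would color every edge of $G$ with the single color $1$. Any edge-cut consisting of a single edge is then automatically a rainbow cut, since a one-element set can never contain two edges of the same color. For any two distinct vertices $u$ and $v$ there is a unique $u$-$v$ path $P$ in the tree, and every edge $e$ on $P$ is a bridge of $G$, so $\{e\}$ is an edge-cut that places $u$ and $v$ in different components of $G-e$. Thus $\{e\}$ is a $u$-$v$ rainbow cut, and $G$ is rainbow disconnected using only one color. Combined with $rd(G)\ge\lambda(G)\ge 1$ for any nontrivial connected graph (again Lemma~\ref{p1}), this yields $rd(G)=1$.

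For the converse, suppose $rd(G)=1$. By Lemma~\ref{p1} we have $\lambda^+(G)\le rd(G)=1$, and since $G$ is nontrivial and connected, $\lambda^+(G)=1$; equivalently $\lambda(u,v)=1$ for every pair of distinct vertices $u,v$. I would then show $G$ is acyclic. If instead $G$ contained a cycle $C$, then any two distinct vertices $u,v$ on $C$ would be joined by two edge-disjoint paths, namely the two arcs of $C$; any edge-cut separating $u$ from $v$ must intersect each of these paths and hence contain at least two edges, forcing $\lambda(u,v)\ge 2$ and contradicting $\lambda^+(G)=1$. Hence $G$ has no cycle, and being connected it is a tree.

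The argument is short and I do not anticipate a serious obstacle; the only point requiring care is the converse, where one must correctly translate the statement $rd(G)=1$ into a statement about local edge-connectivity and then rule out cycles. If one wishes to avoid invoking Lemma~\ref{p1}, the converse can instead be argued directly by contraposition: if $G$ is not a tree it contains a cycle, so for two vertices on that cycle every separating edge-cut has at least two edges, and with a single available color such a cut can never be rainbow; thus no one-coloring is a rainbow disconnection coloring and $rd(G)\ge 2$.
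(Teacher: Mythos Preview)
Your proof is correct. Note, however, that the paper does not supply its own proof of this lemma: it is quoted from \cite{CDHHZ} and stated without argument, so there is nothing in the paper to compare against. Your write-up is a clean, self-contained justification; the forward direction is the obvious one-color construction, and for the converse your use of Lemma~\ref{p1} (specifically $\lambda^+(G)\le rd(G)$) together with the observation that a cycle forces $\lambda(u,v)\ge 2$ for two of its vertices is exactly the standard way to see it. The alternative contrapositive you sketch at the end is essentially the same argument unwound, so either version is fine.
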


\begin{lem}{\upshape\cite{CDHHZ}}\label{com}
For each integer $n\geq 4$, $rd(K_n)=n-1$.
\end{lem}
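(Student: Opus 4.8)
The plan is to establish the two inequalities $rd(K_n)\ge n-1$ and $rd(K_n)\le n-1$ separately. For the lower bound I would first compute the local edge-connectivity of $K_n$. For any two vertices $u,v$ and any partition $(A,B)$ of $V(K_n)$ with $u\in A$ and $v\in B$, the number of crossing edges equals $|A|\,|B|$; minimizing $|A|(n-|A|)$ over $1\le |A|\le n-1$ forces $|A|\in\{1,n-1\}$ and gives $\lambda(u,v)=n-1$, so $\lambda^+(K_n)=n-1$. By Lemma \ref{p1} this yields $rd(K_n)\ge \lambda^+(K_n)=n-1$. (Equivalently, a rainbow $u$–$v$ cut must contain at least $\lambda(u,v)=n-1$ edges, all of pairwise distinct colors, so at least $n-1$ colors are necessary.)

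For the upper bound I would exhibit, in each parity of $n$, an edge-coloring with $n-1$ colors under which $K_n$ is rainbow disconnected. The guiding observation is that the set of all edges incident to a vertex $w$ forms a cut separating $w$ from every other vertex; if these $n-1$ edges receive pairwise distinct colors (a \emph{rainbow star} at $w$), then $w$ can be separated from anyone. Hence it suffices to produce a coloring with $n-1$ colors in which all but at most one vertex has a rainbow star, for then every pair contains an endpoint with a rainbow star, giving the required rainbow cut. When $n$ is even, $K_n$ is $1$-factorable into $n-1$ perfect matchings; coloring the $i$-th matching with color $i$ gives a proper $(n-1)$-edge-coloring in which every vertex meets each color exactly once, so every star is rainbow and $rd(K_n)\le n-1$.

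The hard part is odd $n$, where $\chi'(K_n)=n$, so Lemma \ref{p1} only gives $rd(K_n)\le n$; no coloring with $n-1$ colors can make all $n$ stars simultaneously rainbow. Here I would single out one vertex $v_n$ and $1$-factorize the even-order complete graph $K_{n-1}$ on $V(K_n)\setminus\{v_n\}$ into $n-2$ perfect matchings, using colors $1,\dots,n-2$; then I would color every edge incident to $v_n$ with the single remaining color $n-1$. Each vertex $v_i$ with $i\le n-1$ then sees colors $1,\dots,n-2$ on its edges inside $K_{n-1}$ and color $n-1$ on its edge to $v_n$, hence a rainbow star; only $v_n$ fails to have one. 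By the observation above this coloring makes $K_n$ rainbow disconnected with $n-1$ colors, so $rd(K_n)\le n-1$, and combining with the lower bound gives $rd(K_n)=n-1$. The main subtlety is exactly this odd case: recognizing that rainbow disconnection does not demand a proper coloring, and that tolerating a single non-rainbow star is harmless precisely because every pair of vertices can still be separated through its other endpoint.
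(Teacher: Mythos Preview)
Your argument is correct. Note, however, that the paper does not supply its own proof of this lemma: it is quoted verbatim from \cite{CDHHZ}, so there is no in-paper proof to compare against. That said, your approach matches the one in the cited source and is entirely in the spirit of the methods the present paper uses elsewhere. In particular, your odd-$n$ construction---properly edge-color a $K_{n-1}$ with $n-2$ colors and give all edges at the remaining vertex a single new color, so that every vertex except one has a rainbow star---is exactly the ``all but one vertex has a rainbow incident set'' trick that the paper employs in its proof of Theorem~\ref{even}. The lower bound via $\lambda^+(K_n)=n-1$ and Lemma~\ref{p1} is likewise the standard route. Nothing is missing; the only subtlety you flagged (that $\chi'(K_n)=n$ for odd $n$ forces one to abandon proper colorings) is handled correctly.
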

\noindent\textbf{Remark 1:} For any integer
$n\geq 2$, $rd(K_n)=n-1$ since it is easy to
verify that $rd(K_2)=1$ and $rd(K_3)=2$.

\begin{lem}{\upshape\cite{M}}\label{key}
Let $G$ be a graph of order $n$ $(n\geq k+2 \geq 3)$. If
$e(G)>\frac{k+1}{2}(n-1)-\frac{1}{2}\sigma_k(G),$ where
$\sigma_k(G)=\sum\limits_{\mbox{\tiny $\begin{array}{c}
             x\in V(G) \\
             d(x)\le k \end{array}$}}(k-d(x))$, then
$\lambda^+(G)\geq k+1.$
\end{lem}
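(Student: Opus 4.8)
The plan is to prove the contrapositive: if $\lambda^+(G)\le k$ then $e(G)\le\frac{k+1}{2}(n-1)-\frac12\sigma_k(G)$. First I would recast this bound in degree form. Using $\sum_x d(x)=2e(G)$ together with the identity $\sigma_k(G)=kn-\sum_x\min(d(x),k)$, a one-line computation turns the desired inequality into the equivalent statement
\[\sum_{x:\,d(x)>k}\bigl(d(x)-k\bigr)\le n-k-1.\]
Writing $\mathrm{exc}(G)=\sum_{x:\,d(x)>k}(d(x)-k)$ for the left-hand side, the whole lemma becomes: $\lambda^+(G)\le k$ forces $\mathrm{exc}(G)\le n-k-1$. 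This reformulation is the right one because $\mathrm{exc}$ is insensitive to low-degree vertices and behaves additively under the splitting used below.

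I would then argue by induction on $n$, splitting according to how many vertices have degree exceeding $k$. If at most one vertex $v$ has $d(v)>k$, then immediately $\mathrm{exc}(G)=(d(v)-k)\le (n-1)-k=n-k-1$, and we are done; this case is self-contained and serves as the base layer of the induction (in particular it disposes of all sparse graphs, and of the case where a Gomory--Hu tree of $G$ is a star, since every leaf $\ell$ of such a tree satisfies $d(\ell)=\lambda(\ell,\cdot)\le k$).

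If instead two vertices $u,w$ satisfy $d(u),d(w)>k$, I would take a minimum $u$--$w$ edge-cut $[A,B]$ with $u\in A$, $w\in B$; its size is $\lambda(u,w)\le\lambda^+(G)\le k$. Since this size is strictly smaller than both $d(u)$ and $d(w)$, neither side can be a singleton, so $|A|,|B|\ge 2$. I then contract $B$ to a single vertex to form $G_A=G/B$ (of order $|A|+1<n$) and symmetrically $G_B=G/A$. The two contracted vertices have degree equal to the cut size $\le k$, so they contribute nothing to $\mathrm{exc}$, which yields the \emph{exact} identity $\mathrm{exc}(G)=\mathrm{exc}(G_A)+\mathrm{exc}(G_B)$ with no boundary loss. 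Applying the induction hypothesis to both pieces and then checking the elementary inequality $\max(|A|-k,0)+\max(|B|-k,0)\le |A|+|B|-k-1=n-k-1$, which holds in every sign case once $n\ge k+2$, closes the step. The exactness of the additivity is precisely what makes the constant match $n-k-1$, so that the extremal graphs (tree-like gluings of copies of $K_{k+1}$, where equality is attained) are respected.

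The crux is to verify that contraction does not destroy the hypothesis, i.e.\ that $\lambda^+(G_A)\le k$ and $\lambda^+(G_B)\le k$. For this I would choose $[A,B]$ to be the cut associated with an edge of a Gomory--Hu tree of $G$, and use the laminar structure of minimum cuts: contracting one side produces a graph whose Gomory--Hu tree is the corresponding subtree together with the contracted edge, all of whose weights are at most $k$, so $\lambda^+$ stays $\le k$. Equivalently, one shows by uncrossing an arbitrary minimum $p$--$q$ cut against the fixed cut $[A,B]$ (via submodularity of the cut function) that any pair $p,q$ on one side is separable by a cut of size $\le k$ leaving the other side intact. Making this uncrossing/Gomory--Hu bookkeeping airtight is the main technical obstacle; the most delicate point is controlling pieces that inherit a single high-degree vertex once one passes to the multigraphs created by contraction, so that the additive bound $\mathrm{exc}(H)\le |V(H)|-k-1$ is not violated along the recursion. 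Handling those pieces correctly — by keeping them in the ``at most one high-degree vertex'' base case rather than recursing further — is what I expect to require the most care.
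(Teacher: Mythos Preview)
The paper does not prove this lemma at all: it is quoted verbatim from Mader~\cite{M} and used as a black box in the proof of Theorem~\ref{even}. There is therefore no ``paper's own proof'' to compare against; the authors simply invoke the result.

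As for your sketch on its own merits: the reformulation $\mathrm{exc}(G)\le n-k-1$ is correct and is indeed the natural shape in which Mader's theorem is usually proved. The inductive splitting along a small cut, together with the additivity $\mathrm{exc}(G)=\mathrm{exc}(G_A)+\mathrm{exc}(G_B)$, is the standard strategy and matches Mader's original argument in spirit. The point you flag as delicate---that contraction produces multigraphs and that the induction hypothesis must be re-entered carefully when a side has fewer than $k+1$ vertices---is real. In particular, your base case ``at most one vertex of degree $>k$'' must be checked for the \emph{multigraph} $G_A$, where the single high-degree vertex $u$ can have $d_{G_A}(u)=d_G(u)$ even though $|V(G_A)|=|A|+1$ may be small; the bound $d_G(u)\le (|A|-1)+(\text{cut size})\le |A|-1+k$ is what rescues this, giving $\mathrm{exc}(G_A)\le |A|-1$, which is enough once you also use $\mathrm{exc}(G_B)\le |B|-k$ on the other side. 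Making that asymmetry explicit (rather than the symmetric $\max(|A|-k,0)+\max(|B|-k,0)$ you wrote) is what actually closes the induction. The preservation of $\lambda^+\le k$ under contraction of one side of a minimum cut is genuinely the crux, and your Gomory--Hu/submodularity route is the right one; it just needs to be written out rather than gestured at.
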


\begin{lem}\label{2}
If $n$ is even, then there exists a $k$-regular graph
$G$ of order $n$, where $1\leq k\leq n-2$, that satisfies
both of the following conditions:

(i) $G$ is $1$-factorable.

(ii) there exists a vertex $u$ of $G$ such that $G[N(u)]$
can add $\lfloor\frac{k}{2}\rfloor$ matching edges.
\end{lem}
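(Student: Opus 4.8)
The plan is to exhibit, for every $k$ with $1\le k\le n-2$, a single explicit $k$-regular graph $G$ on $n$ vertices together with the vertex $u$ that witnesses (ii); both requirements will come out of the round-robin $1$-factorization of $K_n$. Write $m=n-1$ (which is odd) and realise $K_n$ on the vertex set $\mathbb{Z}_m\cup\{\infty\}$ with factors $F_i=\{\{\infty,i\}\}\cup\{\{i-r,i+r\}:1\le r\le (m-1)/2\}$ for $i\in\mathbb{Z}_m$, all arithmetic modulo $m$. The one fact about this factorization I shall use is: for distinct $a,b\in\mathbb{Z}_m$ the edge $\{a,b\}$ lies in exactly one factor $F_i$, namely the one with $2i\equiv a+b\pmod m$, this $i$ being unique because $\gcd(2,m)=1$; also $\{\infty,a\}\in F_a$.

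I would set $j=m-k$ (so $1\le j\le m-1$) and take $G=F_j\cup F_{j+1}\cup\cdots\cup F_{m-1}=K_n-(F_0\cup\cdots\cup F_{j-1})$. Since $G$ is presented as an edge-disjoint union of the $k=m-j$ perfect matchings $F_j,\dots,F_{m-1}$, it is $k$-regular and $1$-factorable, so (i) is immediate. For (ii) I take $u=\infty$. Because $\{\infty,i\}\in F_i$, we get $N_G(\infty)=\{j,j+1,\dots,m-1\}$, a set of $k$ vertices lying in $\mathbb{Z}_m$, and a pair $\{a,b\}\subseteq N_G(\infty)$ is a non-edge of $G$ precisely when its (unique) factor index is $<j$.

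The heart of the argument is to produce $\lfloor k/2\rfloor$ pairwise disjoint such non-edges. Put $p=\lfloor j/2\rfloor$, so $0\le p\le j-1$ and $F_p$ is among the deleted factors, and set $S=m+2p$. Any pair $\{a,b\}$ with $j\le a<b\le m-1$ and $a+b=S$ has factor index $p$, since $2i\equiv a+b=S\equiv 2p\pmod m$ forces $i=p$; hence such a pair is a non-edge of $G$, and distinct pairs with the same sum are vertex-disjoint, so together they form a matching. It remains to count these pairs: solving $j\le a<S-a\le m-1$ puts $a$ in the interval from $\max(j,2p+1)$ to $(m-1)/2+p$, and a short computation — splitting on the parity of $j$, which fixes $\max(j,2p+1)$ and the parity of $k=m-j$ — shows the number of integers there is exactly $\lfloor(m-j)/2\rfloor=\lfloor k/2\rfloor$. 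Adding these $\lfloor k/2\rfloor$ edges to $G[N_G(\infty)]$ then gives a matching of non-edges of the required size, which is (ii).

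I expect the only genuine work to be this last count; it is routine, but one must check that in the extreme case $j=m-1$ (that is, $k=1$) the interval is empty and $\lfloor k/2\rfloor=0$, so nothing goes wrong, and of course the quoted property of the round-robin factorization needs a one-line justification. There is no conceptual obstacle: $1$-factorability comes for free because $G$ is built as an edge-disjoint union of perfect matchings, and the vertex $\infty$ is tailor-made so that its $G$-neighbourhood is the tail $\{j,\dots,m-1\}$ of $\mathbb{Z}_m$, inside which the single deleted factor $F_p$ already carves out the matching we want.
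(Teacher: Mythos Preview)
Your proof is correct. Both you and the paper start from a $1$-factorization of $K_n$ and obtain $G$ by keeping exactly $k$ of the perfect matchings, but the executions differ. The paper argues iteratively: it fixes a vertex $u$ and a first deleted factor, and then at each subsequent step removes from the current graph a $1$-factor containing a prescribed edge at $u$, so that $N(u)$ shrinks by one specified vertex per step; the matching of addable non-edges in $G[N(u)]$ is tracked along the way as the original deleted matching restricted to the surviving neighbourhood. You instead work with the explicit round-robin factorization on $\mathbb{Z}_m\cup\{\infty\}$, take $G$ to be the union of $k$ \emph{consecutive} factors $F_j,\dots,F_{m-1}$, set $u=\infty$, and then locate all of the required non-edge matching inside a single deleted factor $F_{\lfloor j/2\rfloor}$ via the sum condition $a+b=m+2\lfloor j/2\rfloor$. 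Your route buys a one-shot, fully explicit construction with no induction, at the cost of committing to the round-robin formula and doing the arithmetic count; the paper's route is factorization-agnostic and avoids that count, but is more verbose and needs the step-by-step bookkeeping. Either way the core idea---$1$-factorability for free, and a controlled neighbourhood of the distinguished vertex---is the same.
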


\begin{proof}
Let $G_1=K_{2n}$. Since $K_{2n}$ is $1$-factorable,
$K_{2n}$ has $2n-1$ edge-disjoint $1$-factors.
First, we remove a $1$-factor from $K_{2n}$, namely
$n$ matching edges. Let $e_1$, $e_2$,\ldots, $e_n$ be
$n$ removing matching edges of $G_1$ and let $v_{i,1}$ and
$v_{i,2}$ be the two end vertices of $e_{i}$. Let
$u=v_{n,1}$ be a vertex of $G_1$. Then the remaining
graph $G_2$ is a $(2n-2)$-regular graph with
$N_{G_2}(u)=\{v_{i,1},v_{i,2}|1\leq i\leq n-1\}$,
and $\{e_i|1\leq i\leq n-1\}$ are matching edges
which can be added to $G_2[N(u)]$ and the number
of matching edges is $\lfloor\frac{2n-2}{2}\rfloor$.

Second, we remove the $1$-factor from $G_2$ which
contains the edge $uv_{n-1,1}$, and denote remaining
graph by $G_3$. Obviously, $G_3$ is a $(2n-3)$-regular
graph and $N_{G_3}(u)$ is $N_{G_2}(u)\setminus v_{n-1,1}$,
and $\{e_i|1\leq i\leq n-2\}$ are matching edges
which can be added to $G_3[N(u)]$ and the number of
matching edges is $\lfloor\frac{2n-3}{2}\rfloor$.

Third, we remove the $1$-factor from $G_3$ which
contains the edge $uv_{n-1,2}$, and denote remaining
graph by $G_4$. Obviously, $G_4$ is a $(2n-4)$-regular
graph and $N_{G_4}(u)$ is $N_{G_3}(u)\setminus v_{n-1,2}$,
and $\{e_i|1\leq i\leq n-2\}$ are matching edges
which can be added to $G_4[N(u)]$ and the number
of matching edges is $\lfloor\frac{2n-4}{2}\rfloor$.

For $G_j$ ($j\geq 2$), if $j$ is even, then we remove the
$1$-factor from $G_j$ which contains the edge
$uv_{n-\lfloor\frac{j}{2}\rfloor,1}$; if $j$ is odd,
then we remove the $1$-factor from $G_j$ which contains
the edge $uv_{n-\lfloor\frac{j}{2}\rfloor,2}$.
Obviously, the remaining graph $G_{j+1}$ is a
$(2n-j-1)$-regular graph and $N_{G_{j+1}}(u)$ is
$N_{G_j}(u)\setminus v_{n-\lfloor\frac{j}{2}\rfloor,i}$
($i=1$ or $2$), and
$\{e_i|1\leq i\leq n-\lfloor\frac{j}{2}\rfloor-1\}$
are matching edges which can be added to $G_{j+1}[N(u)]$
and the number of matching edges is
$\lfloor\frac{2n-j-1}{2}\rfloor$.

Repeating the above process, we can get a $k$-regular graph $G_{2n-k}$
of order $n$ which is $1$-factorable. Furthermore,
$\{e_i|1\leq i\leq n-\lfloor\frac{2n-k}{2}\rfloor\}$
are matching edges which can be added to $G_{2n-k}[N(u)]$
and the number of matching edges is $\lfloor\frac k{2}\rfloor$.
\end{proof}

Now we are ready to give a proof to Theorem 2.1.

\noindent {\bf Proof~ of~ Theorem ~\ref{even}:}
It is easy to see that the graphs $G$ of maximum size with order $n$
and $rd(G)=k$ is not more than $\frac{(k+1)(n-1)}{2}$.
Otherwise, $rd(G)\geq k+1$ by Lemmas \ref{p1} and
\ref{key}. Now we show that the graphs $G$ of maximum size
with even order $n$ and $rd(G)=k$ is
$\lfloor\frac{(k+1)(n-1)}{2}\rfloor$ for $1\leq k\leq n-1$.
For $k=n-1$, let $G=K_n$. Note that $|E(G)|=
\lfloor\frac{(k+1)(n-1)}{2}\rfloor$ and $rd(G)=n-1$
by Remark $1$.
For $k=1$, let $G$ be a tree. Note that
$|E(G)|=\lfloor\frac{(k+1)(n-1)}{2}\rfloor$ and
$rd(G)=1$ by Lemma \ref{p2}.
Now we construct a graph $G$ for $2\leq k\leq n-2$
as follows.
Let $H_{k-1}$ be a $(k-1)$-regular graph of order $n$.
For $k\geq 2$, $H_{k-1}$ can be selected so that it
is $1$-factorable and there exists a vertex $u$ of
$H_{k-1}$ such that $\lfloor\frac{k-1}{2}\rfloor$
matching edges can be added to $N_{H_{k-1}}(u)$ by
Lemma \ref{2}.
Let $G$ be a graph by adding $\lfloor\frac{k-1}{2}\rfloor$
matching edges to $N_{H_{k-1}}(u)$ and adding $n-k$ edges
of $\{uw|w\in V(H_{k-1})\setminus N_{H_{k-1}}[u]\}$ in
$H_{k-1}$.
Thus, $G$ is a graph of order $n$ with $|E(G)|=\frac{(k-1)n}{2}+\lfloor\frac{k-1}{2}\rfloor+n-k
=\lfloor\frac{(k+1)(n-1)}{2}\rfloor$.
Since $\chi'(H_{k-1})=k-1$, we obtain a proper
edge-coloring $c_0$ of $H_{k-1}$ using colors from
$[k-1]$. We may extend $c_0$ to an edge-coloring $c$
of $G$ by assigning a fresh color $k$ to all newly
added edges in $H_{k-1}$.
Note that the set $E_x$ of edges incident with $x$
in $G$ is a rainbow set for each vertex
$x\in V(G)\setminus u$. Let $p$ and $q$ be two
vertices of $G$. Then at least one of $p$ and $q$
is not $u$, say $p\neq u$. Since $E_p$ is a $p-q$
rainbow cut, $c$ is a rainbow disconnection coloring
of $G$ using at most $k$ colors. Therefore, $rd(G)\leq k$.
On the other hand, $E(G)=\lfloor\frac{(k+1)(n-1)}{2}\rfloor
> \frac{k(n-1)}{2}$ since $n\geq 3$, it follows
from Lemmas \ref{p1} and \ref{key} that $rd(G)\geq k$.
$~~~~~~~~~~~~~~~~~~~~~~~~~~~~~~~~~~~~~~~~~~~~~~~~~~
~~~~~~~~~~~~~~~~~~~~~~~~~~~~~~~~~~~~~~~~~~~~~~~~~~~\Box$

\section{Erd\H{o}s-Gallai-type results for $rd(G)$ }

Now we consider the following two kinds of
Erd\H{o}s-Gallai-type problems for $rd(G)$.

\textbf{Problem A}. Given two positive integers
$n$ and $k$ with $1\leq k \leq n-1$, compute the
maximum integer $g(n, k)$ such that for any graph
$G$ of order $n$, if $|E(G)|\leq g(n, k)$, then
$rd(G)\leq k$.

\textbf{Problem B}. Given two positive integers
$n$ and $k$ with $1\leq k \leq n-1$, compute the
minimum integer $f(n, k)$ such that for any graph
$G$ of order $n$, if $|E(G)|\geq f(n, k)$ then
$rd(G)\geq k$.

It is worth mentioning that the two parameters
$f(n, k)$ and $g(n, k)$ are equivalent to
another two parameters. Let
$t(n,k)$=$min\{|E(G)|:|V(G)|= n, rd(G)\geq k\}$
and
$s(n, k)$=$max\{|E(G)|:|V(G)|=n, rd(G)\leq k\}$.
It is easy to see that $g(n,k) = t(n,k+1)-1$ and
$f(n,k) = s(n,k-1)+1$.

We first state two lemmas, which will be used
to determine the values of $f(n,k)$ and $t(n,k)$.

\begin{lem}{\upshape\cite{CDHHZ}}\label{min}
For integers $k$ and $n$ with $1\leq k \leq n-1$,
the minimum size of a connected graph of order
$n$ with $rd(G)=k$ is $n+k-2$.
\end{lem}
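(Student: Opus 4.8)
The plan is to establish the exact value by two complementary bounds: first, that every connected graph $G$ of order $n$ with $rd(G)=k$ has at least $n+k-2$ edges, and second, that this many edges suffices, by exhibiting a connected graph of order $n$ with exactly $n+k-2$ edges and $rd(G)=k$. The degenerate cases $k=1$ (trees, by Lemma \ref{p2}) and $k=n-1$ are easy to dispose of separately, so the content is in $2\le k\le n-2$.

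For the lower bound, the key step is the general inequality $rd(G)\le \beta_1(G)+1$, where $\beta_1(G)=|E(G)|-|V(G)|+1$ is the cycle rank. I would prove this by an explicit coloring: fix a spanning tree $T$ of $G$, give every edge of $T$ the color $1$, and give the $\beta_1(G)$ non-tree edges pairwise distinct fresh colors $2,3,\dots,\beta_1(G)+1$. Given distinct vertices $u,v$, choose any edge $e$ on the unique $u$--$v$ path in $T$ and take $R$ to be the set of all edges of $G$ joining the two components of $T-e$. Then $R$ is an edge-cut separating $u$ and $v$, it contains exactly one tree edge, namely $e$, and all its other edges are non-tree edges carrying mutually distinct colors; hence $R$ is a rainbow cut. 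Thus $rd(G)\le \beta_1(G)+1$, and from $rd(G)=k$ we get $k\le |E(G)|-n+2$, i.e. $|E(G)|\ge n+k-2$.

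For the matching construction, take the generalized theta graph $\Theta$ on the $k+1$ vertices $u,v,w_1,\dots,w_{k-1}$ consisting of the edge $uv$ and the $k-1$ internally disjoint paths $uw_iv$, and attach at $w_1$ a pendant path on $n-k-1$ new vertices; call the result $G$. Counting, $|E(G)|=(2k-1)+(n-k-1)=n+k-2$. Since $uv$ and the $k-1$ paths $uw_iv$ are $k$ edge-disjoint $u$--$v$ paths, $\lambda(u,v)\ge k$, so $\lambda^+(G)\ge k$ and hence $rd(G)\ge k$ by Lemma \ref{p1}. For $rd(G)\le k$, color $uv$ with $1$, each edge $uw_i$ with $i+1$, each edge $w_iv$ with $1$, and every pendant-path edge with $1$, using the $k$ colors $\{1,\dots,k\}$; one then verifies pair by pair that a rainbow cut exists: the $k$ edges at $u$ realize the colors $1,\dots,k$ and separate $u$ from $v$; the two edges at $w_i$ have colors $i+1$ and $1$ and separate $w_i$ (or, for $i=1$, $w_1$ together with the pendant path) from any other core vertex; and any pair involving a pendant vertex is separated by a single bridge. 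Hence $rd(G)=k$, and $G$ has the required size.

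The genuinely new ingredient is the inequality $rd(G)\le \beta_1(G)+1$, and the point requiring care is checking that the spanning-tree coloring really yields a rainbow cut for every pair (the one-tree-edge property of a fundamental cut is what makes it work). The construction is then essentially bookkeeping, the only subtlety being to attach the pendant path at $w_1$ so that the ``core'' cuts $\{uw_i,w_iv\}$ still separate the intended vertices; I expect that verification to be the most tedious, though not conceptually hard, part of the write-up.
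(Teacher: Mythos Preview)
The paper does not give its own proof of this lemma: it is quoted verbatim from \cite{CDHHZ} and used as a black box to deduce $g(n,k)$. So there is no in-paper argument to compare against.

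That said, your proposed proof is correct and self-contained. The lower bound via the cycle-rank inequality $rd(G)\le \beta_1(G)+1$ is clean; the only point to state explicitly is that a fundamental cut with respect to a spanning tree $T$ contains \emph{exactly one} edge of $T$, which is the standard fact making the coloring work. Your theta-graph-plus-pendant-path construction is also fine; in fact the same construction (with an empty pendant path) already handles $k=n-1$, so you need not treat that case separately. The one place to tidy in the write-up is the verification for $w_1$: the edges incident with $w_1$ are $uw_1,w_1v$, and the first pendant edge, carrying colors $2,1,1$, so $E_{w_1}$ is \emph{not} itself rainbow; you correctly use the smaller cut $\{uw_1,w_1v\}$ instead, and it is worth saying so explicitly rather than leaving it to ``pair by pair'' hand-waving.
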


Note that the following result from \cite{BCL} is also true
for $n=1,3$. So we can state it as follows, without $n\geq 5$.

\begin{lem}{\upshape\cite{BCL}}\label{odd}
Let $k$ and $n$ be integers with $1 \leq k \leq n-1$
and $n$ be odd.
Then the maximum size of a connected graph
$G$ of order $n$ with $rd(G)=k$
is $\frac{(k+1)(n-1)}{2}$.
\end{lem}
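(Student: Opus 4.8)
The plan is to establish the bound from both sides, following the same strategy as the proof of Theorem~\ref{even}, but exploiting that here $n-1$ is \emph{even}: this makes $1$-factorizations available, so the extremal value $\frac{(k+1)(n-1)}{2}$ is automatically an integer and the extremal construction is in fact cleaner than in the even-order case.

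For the upper bound I would show that every connected graph $G$ of order $n$ with $rd(G)=k$ satisfies $e(G)\le\frac{(k+1)(n-1)}{2}$. When $k=n-1$ this is immediate since $e(G)\le\binom n2=\frac{(k+1)(n-1)}{2}$. When $k\le n-2$, I would argue by contradiction: if $e(G)>\frac{(k+1)(n-1)}{2}$ then, since $\sigma_k(G)\ge 0$, we also have $e(G)>\frac{k+1}{2}(n-1)-\frac12\sigma_k(G)$, so Lemma~\ref{key} gives $\lambda^+(G)\ge k+1$, and then Lemma~\ref{p1} forces $rd(G)\ge k+1$, contradicting $rd(G)=k$.

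For sharpness I would construct, for each $k$ with $1\le k\le n-1$, a connected graph $G$ of order $n$ with $e(G)=\frac{(k+1)(n-1)}{2}$ and $rd(G)=k$. For $k=1$ a tree does the job by Lemma~\ref{p2}. For $2\le k\le n-1$, since $n-1$ is even the complete graph $K_{n-1}$ is $1$-factorable, hence contains $k-1$ edge-disjoint perfect matchings $M_1,\dots,M_{k-1}$; let $F=M_1\cup\cdots\cup M_{k-1}$ (a $(k-1)$-regular graph on $n-1$ vertices) and let $G$ be obtained from $F$ by adding one new vertex $u$ adjacent to all of $V(F)$. Then $G$ is connected and $e(G)=e(F)+(n-1)=\frac{(k-1)(n-1)}{2}+(n-1)=\frac{(k+1)(n-1)}{2}$. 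Coloring every edge of $M_i$ with color $i$ for $i\in[k-1]$ and every edge incident with $u$ with the new color $k$, each set $E_v$ of edges at a vertex $v\neq u$ uses at most one edge of each color in $[k-1]$ plus the edge $uv$ of color $k$, hence is a rainbow cut separating $v$ from every other vertex of $G$; since of any two vertices at least one is not $u$, this gives $rd(G)\le k$. Conversely $e(G)=\frac{k(n-1)}{2}+\frac{n-1}{2}>\frac{k(n-1)}{2}\ge\frac{k(n-1)}{2}-\frac12\sigma_{k-1}(G)$, so Lemma~\ref{key} applied with parameter $k-1$ (legitimate since $n\ge k+1\ge 3$) yields $\lambda^+(G)\ge k$, whence $rd(G)\ge k$ by Lemma~\ref{p1}; thus $rd(G)=k$.

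I do not expect a genuinely hard step. The two points that need care are: (i) applying Lemma~\ref{key} with the shifted parameter $k-1$ and checking its order hypothesis together with the (harmless, nonnegative) $\sigma$-term in both directions; and (ii) recognizing that the even-order device of Lemma~\ref{2} has no odd-order analogue---no graph of odd order is $1$-factorable---so instead of imitating the construction used in Theorem~\ref{even} one should attach a single universal vertex to a $1$-factorable $(k-1)$-regular graph on the remaining \emph{even} number $n-1$ of vertices. Finally, the boundary values $k=1$ and $k=n-1$, where Lemma~\ref{key} is not applicable, must be disposed of separately using Lemma~\ref{p2} and Remark~1.
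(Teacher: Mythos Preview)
This lemma is quoted from \cite{BCL} and the present paper gives no proof of it, so there is no in--paper argument to compare against. Your argument is correct and is precisely the odd--order analogue of the paper's proof of Theorem~\ref{even}: Lemmas~\ref{p1} and~\ref{key} force the upper bound, and an explicit $1$-factorable construction realises it. Your construction---a universal vertex joined to a $(k-1)$-regular $1$-factorable graph on the even number $n-1$ of vertices---is the natural simplification the parity allows, since the extra $\lfloor\frac{k-1}{2}\rfloor$ matching edges manufactured by Lemma~\ref{2} in the even case are no longer needed. One small remark on your closing paragraph: for the \emph{lower} bound the application of Lemma~\ref{key} with parameter $k-1$ is in fact legitimate even at $k=n-1$ (the hypothesis becomes $n\ge k+1=n$), so only $k=1$ truly requires separate handling via Lemma~\ref{p2}; it is only the \emph{upper} bound at $k=n-1$ that must bypass Lemma~\ref{key}, and you already did that with the trivial $\binom n2$ count.
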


Using Lemma \ref{min}, we first solve Problem $A$.
\begin{thm}
$g(n,k)=n+k-2$ for $1\leq k\leq n-1$.
\end{thm}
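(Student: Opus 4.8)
The plan is to lean on the reduction recorded just before the statement: $g(n,k)=t(n,k+1)-1$, where $t(n,k+1)=\min\{|E(G)|:|V(G)|=n,\ rd(G)\ge k+1\}$. So it is enough to evaluate $t(n,k+1)$, and I will aim to show $t(n,k+1)=n+k-1$ for $1\le k\le n-2$; the claimed value $g(n,k)=(n+k-1)-1=n+k-2$ then follows at once. The only substantial input is Lemma \ref{min}, which pins down the minimum size of a connected graph of order $n$ with a \emph{prescribed} value of $rd$, together with the trivial observation that this minimum, $n+j-2$, is strictly increasing in $j$.

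First I would establish the lower bound $t(n,k+1)\ge n+k-1$. Let $G$ be any connected graph of order $n$ with $rd(G)\ge k+1$, and write $j=rd(G)$. Since a connected graph of order $n$ always satisfies $rd(G)\le n-1$ (for instance $rd(G)\le\chi'(G)\le\Delta(G)+1\le n-1$ when $\Delta(G)\le n-2$, and a short colouring argument otherwise), we have $k+1\le j\le n-1$, so Lemma \ref{min} applies with value $j$ and gives $|E(G)|\ge n+j-2\ge n+(k+1)-2=n+k-1$. As $G$ ranged over all competitors in the minimum, $t(n,k+1)\ge n+k-1$. For the reverse inequality, note that $1\le k+1\le n-1$, so Lemma \ref{min} also produces a connected graph $G_0$ of order $n$ with $rd(G_0)=k+1$ and exactly $n+k-1$ edges; since $rd(G_0)\ge k+1$, this $G_0$ is admissible in the minimum, whence $t(n,k+1)\le n+k-1$. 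Combining the two bounds, $t(n,k+1)=n+k-1$, and therefore $g(n,k)=n+k-2$.

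This is mostly bookkeeping layered on Lemma \ref{min}, so I do not anticipate a serious obstacle; the two places that deserve a word of care are (i) the use of the monotonicity of $n+j-2$ in $j$ to guarantee that the minimum over all admissible $j\ge k+1$ is attained at $j=k+1$, and (ii) the extreme case $k=n-1$, where $k+1=n$ lies outside the range of Lemma \ref{min} and so must be treated separately, using directly that every connected graph of order $n$ has $rd\le n-1$. A useful consistency check at the other endpoint: for $k=1$ the formula gives $g(n,1)=n-1$, matching the fact that a connected graph on $n$ vertices with at most $n-1$ edges is a tree and hence has $rd=1$ by Lemma \ref{p2}, while a unicyclic graph on $n$ vertices has $n$ edges and $rd=2$.
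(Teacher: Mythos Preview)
Your argument is correct and is precisely the paper's approach: use the identity $g(n,k)=t(n,k+1)-1$ and read off $t$ from Lemma~\ref{min}; you simply make explicit the monotonicity step (passing from $rd(G)=j\ge k+1$ down to the minimizer $j=k+1$) that the paper's two-line proof leaves tacit. Your flag on the boundary case $k=n-1$ is apt---there $t(n,n)$ is a minimum over an empty set and the reduction degenerates---but the paper's proof does not address this either.
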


\begin{proof}
It follows from Lemma \ref{min} that $t(n,k)=n+k-2$
for $1\leq k\leq n-1$.
Thus, $g(n,k)=t(n,k+1)-1=n+k-2$.
\end{proof}
Now we come to the solution for Problem $B$,
we get the following result.
\begin{thm}
$f(n,k)=\lfloor\frac{k(n-1)}{2}\rfloor+1$ for
$1\leq k\leq n-1$.
\end{thm}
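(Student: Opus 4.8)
The plan is to use the identity $f(n,k)=s(n,k-1)+1$ noted in the text, so it suffices to show that $s(n,k-1)=\lfloor\frac{k(n-1)}{2}\rfloor$; equivalently, replacing $k-1$ by $k$, that $s(n,k)=\lfloor\frac{(k+1)(n-1)}{2}\rfloor$ for $0\le k\le n-2$, where $s(n,k)=\max\{|E(G)|:|V(G)|=n,\ rd(G)\le k\}$. I would handle the two parities of $n$ separately, since the sharp extremal constructions are exactly what Theorem \ref{even} and Lemma \ref{odd} provide. For the lower bound on $s(n,k)$, note that the extremal graph $G$ constructed in the proof of Theorem \ref{even} (for even $n$) and the extremal graph from Lemma \ref{odd} (for odd $n$) are connected, have order $n$, size $\lfloor\frac{(k+1)(n-1)}{2}\rfloor$, and satisfy $rd(G)=k\le k$; hence $s(n,k)\ge\lfloor\frac{(k+1)(n-1)}{2}\rfloor$. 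One subtlety: $s(n,k)$ as defined ranges over all graphs with $rd\le k$, but $rd$ is only defined for connected graphs; I would adopt the (standard) convention that disconnected graphs are excluded, or equivalently that we maximize over connected graphs, matching the intent of Problem B.

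For the upper bound, I would argue by contradiction. Suppose $G$ is connected of order $n$ with $|E(G)|\ge\lfloor\frac{(k+1)(n-1)}{2}\rfloor+1$. I want to conclude $rd(G)\ge k+1$, which would show such a $G$ cannot have $rd(G)\le k$, i.e. $s(n,k)\le\lfloor\frac{(k+1)(n-1)}{2}\rfloor$. By Lemma \ref{p1} it is enough to show $\lambda^+(G)\ge k+1$, and for this I invoke Lemma \ref{key} with parameter $k$: it suffices to verify $e(G)>\frac{k+1}{2}(n-1)-\frac12\sigma_k(G)$ together with the hypothesis $n\ge k+2$. Since $\sigma_k(G)\ge 0$, the inequality $e(G)>\frac{k+1}{2}(n-1)$ would already suffice; and $e(G)\ge\lfloor\frac{(k+1)(n-1)}{2}\rfloor+1>\frac{(k+1)(n-1)}{2}$ holds because adding $1$ to the floor of a number strictly exceeds that number. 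The boundary cases $k=n-1$ (where Lemma \ref{key} does not apply, as then $n<k+2$) must be treated directly: here $\lfloor\frac{(k+1)(n-1)}{2}\rfloor=\binom{n}{2}$, so no graph of order $n$ has more edges, and $f(n,n-1)=\binom{n}{2}+1$ is vacuously the right threshold (no graph of order $n$ has $\ge\binom{n}{2}+1$ edges, and indeed $rd(G)\ge n-1$ forces $G=K_n$ by Lemmas \ref{p1}, \ref{com}, Remark 1). I would also dispatch $k=1$ separately using Lemma \ref{p2}: $f(n,1)$ should be $1$, since every graph with at least one edge contains a nontrivial connected piece that is not a single vertex — more carefully, $rd(G)\ge 1$ holds for every nontrivial connected $G$, so $f(n,1)=1=\lfloor\frac{(n-1)}{2}\rfloor\cdot\frac{2}{\,}$... here I would just check directly that the formula gives $f(n,1)=1$ only when $\lfloor\frac{n-1}{2}\rfloor=0$, i.e. $n\le 2$, which signals that the convention/edge cases near $k=1$ need the explicit small-case check rather than Lemma \ref{key}.

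The main obstacle I anticipate is not the generic counting argument — that is a one-line application of Lemma \ref{key} as in the proof of Theorem \ref{even} — but rather the careful bookkeeping at the extreme values of $k$ (namely $k=1$ and $k=n-1$), where Lemma \ref{key} is inapplicable because its hypothesis $n\ge k+2$ fails, and where one must instead appeal to Lemma \ref{p2}, Lemma \ref{com}, and Remark 1, and also pin down the convention for $rd$ on disconnected graphs so that the definitions of $s(n,k)$ and $f(n,k)$ are unambiguous. Once those corner cases are nailed down, combining the matching lower bound (from the explicit extremal graphs for even $n$ via Theorem \ref{even}, and for odd $n$ via Lemma \ref{odd}) with the upper bound yields $s(n,k-1)=\lfloor\frac{k(n-1)}{2}\rfloor$ and hence $f(n,k)=\lfloor\frac{k(n-1)}{2}\rfloor+1$ for all $1\le k\le n-1$.
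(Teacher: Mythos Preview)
Your approach is essentially the paper's: reduce to $f(n,k)=s(n,k-1)+1$ and read off $s(n,\cdot)$ from Theorem~\ref{even} (even $n$) and Lemma~\ref{odd} (odd $n$). The paper's proof is three lines---it simply quotes those two results as black boxes that already contain both the upper bound and the matching construction, so your re-derivation of the upper bound via Lemma~\ref{key} and Lemma~\ref{p1} is redundant (that argument is exactly what was used inside the proof of Theorem~\ref{even}).

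One genuine confusion to flag: your discussion of the case $k=1$ is muddled. You assert that ``$f(n,1)$ should be $1$'', but that is not what the definition gives. The identity $f(n,k)=s(n,k-1)+1$ at $k=1$ asks for $s(n,0)$, i.e.\ the maximum size of a connected graph of order $n$ with $rd(G)\le 0$; since no nontrivial connected graph has $rd=0$, this is a degenerate boundary case that the paper (like you should) simply absorbs into the convention rather than over-analyses. The claimed formula $f(n,1)=\lfloor (n-1)/2\rfloor+1$ is what one wants under that convention, and your attempt to argue it equals $1$ is incorrect. Drop the separate $k=1$ discussion and the argument goes through cleanly; the only case genuinely outside Lemma~\ref{key} is $k=n-1$, and there Lemma~\ref{com} (via Remark~1) suffices, as you note.
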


\begin{proof}
If $n$ is odd, then $s(n,k)=\frac{(k+1)(n-1)}{2}$
for $1\leq k\leq n-1$ from Lemma \ref{odd}.
Thus, $f(n,k)=s(n,k-1)+1=\frac{k(n-1)}{2}+1
=\lfloor\frac{k(n-1)}{2}\rfloor+1$
for $1\leq k\leq n-1$.

If $n$ is even, then $s(n,k)=
\lfloor\frac{(k+1)(n-1)}{2}\rfloor$ for
$1\leq k\leq n-1$ from Theorem \ref{even}.
Thus, $f(n,k)=s(n,k-1)+1=
\lfloor\frac{k(n-1)}{2}\rfloor+1$
for $1\leq k\leq n-1$ for $n$ is even.
\end{proof}

\end{document}